\def\psh{\mathcal{PSH}}
\def\sh{\mathcal{SH}}
\def\cv{\mathcal{CV}}
\def\C{\mathbb{C}}
\def\c2{\mathbb{C}^2}
\def\Om{\Omega}
\def\1{\bold{1}}
\def\al{\alpha}
\def\be{\beta}
\def\e{\varepsilon}
\def\la{\lambda}
\def\g{\gamma}
\def\et{\eta}
\def\G{\Gamma}
\def\s{\sigma}
\def\pa{\partial}
\def\lbr{\lbrace}
\def\rbr{\rbrace}
\def\we{\wedge}
\newtheorem{lemma}{Lemma}[section]
\newtheorem{pro}[lemma]{Proposition}
\newtheorem{def/not}[lemma]{Definition/Notations}
\newtheorem{thm}[lemma]{Theorem}
\newtheorem{ques}[lemma]{Question}
\newtheorem{remark}[lemma]{Remark}
\newtheorem{exa}[lemma]{Example}
\begin{document}

\title{$m$-subharmonic and $m$-plurisubharmonic functions- on two problems of Sadullaev}

\author{S\l awomir Dinew}
 
\subjclass[2000]{Primary: 32W20. Secondary: 32U15, 32Q15.}

\address{Department of Mathematics and Computer Science \\Jagiellonian University\\ 30-409 Krak\'ow, ul. Lojasiewicza 6, Poland \\
slawomir.dinew@im.uj.edu.pl}

\maketitle
\begin{abstract}  We show that the spaces of $A$-$m$-subharmonic and $B$-$m$-subharmonic functions differ in sufficently high dimensions. We also prove that the Monge-Amp\`ere type operator $\mathcal M_m$ associated to the space of $m$-plurisubharmonic functions does not allow an integral comparison principle except in the classical cases $m=1$ and $m=n$.
These answer in the negative two problems posed by A. Sadullaev.
\end{abstract}
\section*{Introduction} 
Given a domain $\Om\subset\mathbb C^n$ an upper semicontinuous function $u$ defined in $\Omega$ is {\it plurisubharmonic} if for any affine complex line {L} the restriction $u|_{\Om\cap L}$ is subharmonic. This class of functions plays a prominent role in complex analysis- we refer to \cite{De} for some applications.

It is hence natural to try to generalize this class and investigate weaker positivity notions which should be less rigid. Below we mention two natrual generalizations which share many potential theoretic properties with plurisubharmonic functions.

A function is said to be {\it $p$-plurisubharmonic} (or $p$-psh) if it is upper semicontinuous and is subharmonic whenever it is restricted to any affine complex $p$-plane. Thus usual plurisubharmonic functions are $1$-psh, while $n$-psh function
in $\mathbb C^n$ are exactly the subharmonic ones.

When $u$ is additionally $\mathcal C^2$ smooth it is easy to see that $p$-plurisubahrmoni-\newline city can be formulated in either of the following two equivalent ways:
\begin{enumerate}
\item at each point the sum of the $p$-smallest eigenvalues of the complex Hessian of $u$ is nonnegative;
\item  the form $i\pa\bar{\pa}u\wedge(i\pa\bar{\pa}||z||^2)^{p-1}$ is positive.
\end{enumerate}
Such function classes have been previously investigated by Dieu \cite{Di}, Verbitsky \cite{Ve}, Abdullaev \cite{Ab1, Ab2} and Harvey-Lawson- \cite{HL} and appear naturally in various branches of complex analysis- from the regularity of the
Bergman projection (see \cite{HM}) to approximation of $\bar{\partial}$-closed differential forms and Andreotti-Grauert theory -see \cite{Ho, AG, De}.

A related but different class of functions are the $m$-subharmonic ones:

A $\mathcal C^2$ function $u$ is said to be $m$-subharmonic if 
$$(i\pa\bar{\pa}u)^j\wedge (i\pa\bar{\pa}||z||^2)^{n-j}$$
are positve top degree forms for every $j=1,\cdots,m$. Using this positivity and the theory of positive currents it is possible to extend this definition to merely bounded upper semicontinuous functions- see \cite{Bl1,Bl3}.

 In \cite{Sa} A. Sadullaev discussed several aspects of the potential theory associated to $m$-subharmonic and $m$-plurisubharmonic functions. This nice survey covers in particular numerous results of the Uzbekistani complex analysis
 group which are otherwise hardly accessible\footnote{In \cite{Sa} the $m$-psh functions are called $m$-subharmonic, while our definition of $m$-subharmonicity agrees with the notion of B-$n-m+1$-subharmonic functions studied there. 
 As the notion of $m$-subharmonicity provided above is now widely used- see \cite{Bl1,DK1,DK2,DK3} we prefer to stick to this terminology.}.

In the case of $m$-sh functions there is a natural Hessian operator  
$$(i\pa\bar{\pa}u)^m\wedge (i\pa\bar{\pa}||z||^2)^{n-m},$$
which could be defined for all locally bounded $m$-sh functions (see \cite{Bl1,Bl3}) and thus one can recover many analogues of pluripotential theory of Bedford and Taylor- see \cite{Bl1,DK1,DK2,DK3,Sa,AS,Ab2}.

Attempts to build such a theory for $m$-psh functions have been only partially successful (\cite{Sa,Ab1,Ab2}). The basic reason is the lack of a natural Hessian operator associated to this function class. In fact \cite{Sa} lists  two approaches:
\begin{itemize}
\item The first one is to use simply the Hessian $(i\pa\bar{\pa}u)^{n-m+1}\wedge (i\pa\bar{\pa}||z||^2)^{m-1}$. The problem is that this need not return a {\it positive} form as the example of the function $u(z)=-|z_1|^2+|z_2|^2+|z_3|^2$ shows. 
Thus one naturally restricts the class of $m$-psh functions to the set $A-m-sh(\Omega)$ defined by
\begin{equation}\label{amsh}
\lbrace u\in\mathcal C^2(\Omega)|\ u\ {\rm is}\ m-psh, (i\pa\bar{\pa}u)^{n-m+1}\wedge (i\pa\bar{\pa}||z||^2)^{m-1}\geq 0\rbrace.
\end{equation}
\item Alternatively one can seek an operator acting on smooth $m$-psh functions and then try to generalize its action suitably to all locally bounded ones. One possible approach for such an operator is given by
\begin{equation}\label{Mm}
\mathcal M_m(u):=\Pi_{1\leq j_1<j_2<\cdots<j_m\leq n}(\lambda_{j_1}+\lambda_{j_2}+\cdots+\lambda_{j_m}),
\end{equation}
with $\la_j$ denoting the eigenvalues of the complex Hessian of $u$. Obviously this operator is nonnegative on smooth $m$-psh functions and it can be shown that it is elliptic when restricted to this class. It seems however hard to apply pluripotential techniques to $\mathcal M_m$ directly.  We remark nevertheless that $\mathcal M_m$ has also been investigated on manifolds (see \cite{TW}) where it appears naturally in geometric problems.
\end{itemize}

Motivated by these two approaches A. Sadullaev in \cite{Sa} posed the following question:
\begin{ques} Let $u$ be a $(n-m+1)$-subharmonic function (called B-$m$-subharmonic in \cite{Sa}) It can be shown that $u$ is A-$m$-subharmonic. Is the convese true i.e. do we have the equality
$$A-m-sh(\Omega)=B-m-sh(\Omega)?$$
\end{ques}

In \cite{Sa} it is shown that the answer is affirmative for $m=2$, while for $m=1, n-1$ and $n$ the eqauivalence is trivially true. 

A basic tool in pluripotential theory is the {\it integral comparison principle} of Bedford and Taylor- see \cite{BT}. Thus in \cite{Sa} it was asked whether comparsion principle holds for $\mathcal M_m$:
\begin{ques}
Let $u,v\in \mathcal C^2(\Om)\cap \mathcal C(\overline{\Omega})$ be two $m$-psh functions with $u>v$ on $\Om$ with equality on $\partial\Om$. Is it true that for some $\al>0$ and all such tuples $(u,v)$ one has
$$\int_{\Om}\mathcal M_m^\al(u)\leq \int_{\Om}\mathcal M_m^\al(v)?$$
\end{ques}

The goal of this note is to answer in the negative both questions. We will show that the inclusion $B-m-sh(\Omega)\subset A-m-sh(\Omega)$ is strict for domains $\Omega\subset \mathbb C^n$ if $n\geq 11$. Interestingly these function classes are indeed the same 
in dimensions less or equal to 7- see Theorem \ref{1} . As for the second question we will show that this inequality holds only when $\al=1$ and furthermore $m=1$ or $n$ i.e. when we deal with the complex Monge-Amp\`ere operator or the Laplacian.

In Author's opinion these negative results show that a construction of potential theory for $m$-psh function is necessarily subtler than in the case of $m$-subharmonic ones and attempts to apply directly tools from Bedford-Taylor theory are doomed to fail. It seems however
possible that a suitable {\it viscosity} potential theory can be constructed- see \cite{DDT} for such an approach.

{\bf Acknowledgements} The Author is grateful to the organizers of the 2nd USA-Uzbekistan conference {\it Mathematics and Mathematical Physics} held in Urgench in 2017, where in particular he learned about these problems and to professor A. Sadullaev for helpful comments. 
The Author was supported by Polish National Science Centre grant 2017/26/E/ST1/00955. 
\section{Preliminaries}
In this section we shall fix our terminolory. Throughout the note we shall work with $\mathcal C^2$ functions hence all operators involved will have a {\it classical} meaning. We refer to \cite{Bl1,DK1,DK2,DK3} for the nonlinear potetnial theory of weak $m$-subharmonic functions.

 Consider the set $\mathcal
A_n$ of all Hermitian symmetric $n\times n$ matrices. For a given
matrix $M\in\mathcal A_n$ let $ \la(M) = (\la _1, \la _2 , ... ,
\la _n)$ be its eigenvalues arranged in increasing order and
let
$$
\s_k(M)=\s_k(\la(M))=\sum _{0<j_1 < ... < j_k \leq n }\la _{j_1}\la _{j_2} 
... \la _{j_k}
$$
be the $k$-th elementary symmetric polynomial applied to the vector $\la(M)$. We shall simply write $\la$ and $\s_k(\la)$
if the matrix $M$ in question is clear from the context. Also we shall use the convention $\s_0(\la):=1$ and $\s_j(\la)=0$
if $\la$ is a vector of less than $j$ coordinates.

We denote by $\s_j(\la|\la_{i_1},\cdots,\la_{i_r})$ the value of $\s_j$ when the coefficients $\la_{i_m}$ are exchanged by zero. Alternatively 
this is the $j$-th elementary symmetric polynomial on the remaining coefficients.

Denote by $S_k(\la):=\frac{\s_k(\la)}{\binom nm}$ the normalized Hessian operators. The normalization is chosen so that
$S_k(t\1)=t^k$ if $\1$ denotes the vector with all coefficients equal to one.

Then one can define the positive cones $\Gamma_m$ as follows
\begin{equation}\label{ga}
\Gamma_m=\lbrace \la\in\mathbb R^n|\ S_1(\la)> 0,\ \cdots,\
S_m(\la)> 0\rbrace.
\end{equation}
Note that the definition of $\Gamma_m$ is non linear if $m>1$.

Below we list the properties of these cones that will be used later on.
\begin{pro}[Maclaurin's inequality]\label{McL1}
If $\la\in\Gamma_m$ then
$$({S_j}(\la))^{\frac 1j}\geq ({S_i}(\la))^{\frac 1i}$$
for $1\leq j\leq i\leq m$. 
\end{pro}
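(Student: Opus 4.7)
The plan is to derive Maclaurin's inequality from the classical Newton inequality
\[
S_k(\lambda)^2 \geq S_{k-1}(\lambda)\,S_{k+1}(\lambda), \qquad 1\leq k\leq n-1,
\]
which holds unconditionally for every real vector $\lambda\in\R^n$ (with the convention $S_0=1$). Newton's inequality is essentially a reformulation of the real-rootedness of the polynomial $\prod_{i=1}^n(t-\lambda_i)$ and, via iterated Rolle's theorem, of all its derivatives; once one normalizes by the binomial factors it reads exactly as stated and requires no sign assumption on the entries of $\lambda$.

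Assume now $\lambda\in\Gamma_m$. By definition this gives $S_k(\lambda)>0$ for every $k=0,1,\ldots,m$, so one may form the positive ratios $q_k := S_k(\lambda)/S_{k-1}(\lambda)$ for $k=1,\ldots,m$. Rewriting Newton's inequality as $q_{k+1}\leq q_k$ produces the decreasing chain
\[
q_1\geq q_2\geq\cdots\geq q_m>0.
\]

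Since the telescoping identity $S_k(\lambda)=q_1q_2\cdots q_k$ holds, the quantity $S_k(\lambda)^{1/k}$ is precisely the geometric mean of $q_1,\ldots,q_k$. A one-line elementary fact then finishes the job: for any non-increasing positive sequence $q_1\geq\cdots\geq q_m$, the geometric means $G_k:=(q_1\cdots q_k)^{1/k}$ are themselves non-increasing in $k$. Indeed, each factor in $G_k$ is at least $q_{k+1}$, hence $G_k\geq q_{k+1}$, which is equivalent (raise to the $(k+1)$-th power and rearrange) to $G_k\geq G_{k+1}$, i.e.\ $S_k(\lambda)^{1/k}\geq S_{k+1}(\lambda)^{1/(k+1)}$. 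Iterating from $k=j$ up to $k=i-1$ yields the claim.

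The one point worth flagging is Newton's inequality itself: it is crucial to invoke the \emph{unconditional} version valid for all real $\lambda$, since the hypothesis $\lambda\in\Gamma_m$ provides no sign information on $S_{m+1},\ldots,S_n$ and therefore the textbook derivations assuming positivity of all entries of $\lambda$ are not directly applicable. Fortunately the real-roots/Rolle argument demands no such positivity, so this is not in fact a genuine obstacle — merely the single subtlety distinguishing the Garding-cone version from the familiar statement for positive reals.
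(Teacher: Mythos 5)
Your argument is correct. The paper itself states Proposition \ref{McL1} as a classical fact and gives no proof (it only proves the weak Newton inequality, Proposition \ref{N2}), so there is nothing to compare line by line; your route is the standard one and fits the paper's toolkit exactly: you invoke the unconditional Newton inequality, which is precisely Proposition \ref{N1} together with the paper's explicit remark that it holds for arbitrary real vectors, and then pass to the ratios $q_k=S_k/S_{k-1}$. The one place where care is needed is the step from $S_k^2\geq S_{k-1}S_{k+1}$ to $q_{k+1}\leq q_k$, which requires $S_{k-1},S_k>0$; since you only need this for $k+1\leq m$ and $\la\in\Gamma_m$ gives strict positivity of $S_0,\dots,S_m$, your use is legitimate, and your closing observation --- that no sign information on the individual eigenvalues (or on $S_{m+1},\dots,S_n$) is available, so the unconditional form of Newton is essential --- is exactly the right subtlety to flag. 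The geometric-mean monotonicity step ($G_k\geq q_{k+1}$ hence $G_k\geq G_{k+1}$) is also correct, so the proof is complete.
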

\begin{pro}[Newton inequality]\label{N1}
Let $\la\in\mathbb R^n$ be any vector. Then for any $k\in\{1,2,\cdots,n-1\}$ one has
$$S_{k-1}(\la)S_{k+1}(\la)\leq S_k^2(\la).$$
\end{pro}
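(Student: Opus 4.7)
\smallskip

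\noindent\textbf{Proof plan.} The statement is the classical Newton inequality for real vectors, and I would prove it in the standard way using Rolle's theorem together with a polynomial reversal to reduce to a quadratic discriminant.

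Form the generating polynomial
$$P(t)=\prod_{i=1}^{n}(t+\lambda_i)=\sum_{j=0}^{n}\sigma_j(\lambda)\, t^{n-j}.$$
Since every root $-\lambda_i$ is real, $P$ is a real-rooted polynomial. The first step is to differentiate $P$ exactly $n-k-1$ times: by Rolle's theorem each derivative of a real-rooted polynomial is again real-rooted, so $P^{(n-k-1)}(t)$ is a real-rooted polynomial of degree $k+1$ whose coefficients are explicit linear combinations of $\sigma_0,\ldots,\sigma_{k+1}$.

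The second step is to reverse this polynomial, i.e.\ to consider $R(t):=t^{k+1}P^{(n-k-1)}(1/t)$. Reversal swaps the coefficient vector end-to-end and preserves real-rootedness whenever the original polynomial has no zero root (the roots are sent to their reciprocals). The case of zero roots can be absorbed by perturbing $\lambda_i\mapsto\lambda_i+\varepsilon$: the desired inequality is polynomial in $\lambda$, so proving it for generic $\lambda$ and passing to the limit $\varepsilon\to 0$ is harmless. Now differentiate $R$ exactly $k-1$ times; by Rolle again, the outcome $R^{(k-1)}(t)$ is a real-rooted polynomial of degree $2$, and only the coefficients $\sigma_{k-1}$, $\sigma_k$, $\sigma_{k+1}$ survive.

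The third step is the explicit bookkeeping: after the two rounds of differentiation and the reversal, the resulting quadratic is of the form
$$A\,\sigma_{k+1}\,t^{2}+B\,\sigma_k\,t+C\,\sigma_{k-1},$$
with $A,B,C$ depending only on $n$ and $k$ through factorials. Writing out these factorials and imposing the discriminant condition $B^2\sigma_k^2\ge 4AC\,\sigma_{k-1}\sigma_{k+1}$, one checks that the ratio $B^2/(4AC)$ equals precisely $\binom{n}{k}^{2}/\bigl(\binom{n}{k-1}\binom{n}{k+1}\bigr)$, which is exactly the normalization factor needed to convert the inequality on the unnormalized $\sigma_j$'s into $S_{k-1}S_{k+1}\le S_k^{2}$.

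The only real obstacle is the final arithmetic check that the factorials line up with the binomial normalization; this is a short but not entirely automatic calculation. Everything else is a direct consequence of the fact that real-rootedness is preserved under differentiation and under polynomial reversal (modulo a trivial perturbation to avoid zero eigenvalues).
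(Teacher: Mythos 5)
Your argument is correct: it is the standard proof of Newton's inequality via real-rootedness, Rolle's theorem and polynomial reversal. Note that the paper does not prove Proposition \ref{N1} at all -- it is quoted as a classical fact (only the weak version, Proposition \ref{N2}, is derived from it) -- so there is no in-text proof to compare with; your route is exactly the textbook one. One bookkeeping slip: writing the final quadratic as $A\sigma_{k+1}t^2+B\sigma_k t+C\sigma_{k-1}$ with $A=(n-k-1)!\,(k+1)!/2$, $B=(n-k)!\,k!$, $C=(n-k+1)!\,(k-1)!/2$, one gets $4AC/B^2=\frac{(k+1)(n-k+1)}{k(n-k)}=\binom{n}{k}^2\big/\bigl(\binom{n}{k-1}\binom{n}{k+1}\bigr)$, so the binomial ratio is $4AC/B^2$, not $B^2/(4AC)$ as you stated; since the discriminant condition $B^2\sigma_k^2\ge 4AC\,\sigma_{k-1}\sigma_{k+1}$ then reads $\sigma_k^2\ge\frac{(k+1)(n-k+1)}{k(n-k)}\sigma_{k-1}\sigma_{k+1}$, which is precisely $S_k^2\ge S_{k-1}S_{k+1}$, the slip is harmless, and your perturbation remark correctly disposes of the degenerate case $\sigma_{k+1}=0$ (where the inequality is trivial anyway).
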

We emphasize that the inequality holds for {\it} any vector and not only for those belonging to cones $\G_j$. 

If all the $\s_j$'s
are positive it is easy to derive a slightly weaker inequality:
\begin{pro}[weak Newton inequality]\label{N2}
If $\la\in\G_k$  then for any $j\in\{1,2,\cdots,k-1\}$ one has
$$\s_{j-1}(\la)\s_{j+1}(\la)\leq \s_j^2(\la).$$
\end{pro}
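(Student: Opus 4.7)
The plan is to deduce the weak Newton inequality as a direct consequence of the unconditional Newton inequality stated in Proposition~\ref{N1}, together with a routine combinatorial computation. The observation driving the argument is that once one passes from the normalized symmetric functions $S_k$ to the unnormalized $\sigma_k$, the combinatorial factors that appear work in our favour, producing an inequality that is \emph{a priori} weaker than Newton's.

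Concretely, I would proceed as follows. First, apply Proposition~\ref{N1} to the vector $\lambda$ to obtain
$$S_{j-1}(\lambda)\,S_{j+1}(\lambda) \leq S_j(\lambda)^{2}.$$
Using the definition $S_k(\lambda)=\sigma_k(\lambda)/\binom{n}{k}$, this rewrites as
$$\sigma_{j-1}(\lambda)\,\sigma_{j+1}(\lambda) \leq \frac{\binom{n}{j-1}\binom{n}{j+1}}{\binom{n}{j}^{2}}\,\sigma_j(\lambda)^{2}.$$
A short computation gives
$$\frac{\binom{n}{j-1}\binom{n}{j+1}}{\binom{n}{j}^{2}} = \frac{j(n-j)}{(j+1)(n-j+1)} \leq 1,$$
and since $\sigma_j(\lambda)^{2} \geq 0$, the desired inequality $\sigma_{j-1}(\lambda)\sigma_{j+1}(\lambda)\leq \sigma_j(\lambda)^{2}$ follows at once.

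I do not foresee any genuine obstacle in this plan; the essential content already lies in Newton's inequality itself, and everything else reduces to manipulating binomial coefficients. Strictly speaking, the hypothesis $\lambda\in\Gamma_k$ is not needed for this derivation---the conclusion holds for arbitrary $\lambda\in\mathbb R^n$. It is nevertheless the natural setting for the statement, since on $\Gamma_k$ all the quantities $\sigma_1(\lambda),\dots,\sigma_k(\lambda)$ are positive and the inequality becomes genuinely informative (comparing three positive numbers) rather than being vacuous; this is precisely the form in which the result will be invoked later in the paper.
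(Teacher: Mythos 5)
Your proposal is correct and follows essentially the same route as the paper: rewrite Newton's inequality (Proposition~\ref{N1}) in terms of the unnormalized $\sigma_j$'s and note that the resulting binomial factor works in the right direction. The only cosmetic difference is that you place the constant (which is $\leq 1$) next to $\sigma_j^2$ rather than its reciprocal (which is $\geq 1$) next to $\sigma_{j-1}\sigma_{j+1}$, which, as you observe, even makes the hypothesis $\la\in\G_k$ superfluous.
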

\begin{proof}
 Newton inequality in terms of $\s_j$'s is simply
 $$\s_j(\la)^2\geq \s_{j-1}(\la)\s_{j+1}(\la)\frac{(n-j+1)(j+1)}{(n-j)j}.$$
 It remains to observe that the last constant is larger than $1$.
\end{proof}

The next proposition is a classical result in vector analysis:
\begin{pro}\label{vector} If the vector $\beta$ belongs to $\Gamma_k$, then the sum of any $n-k+1$-coeffictients of $\beta$ is non-negative. In particular any $\mathcal C^2$ smooth $m$-subharmonic function is $n-m+1$-plurisubharmonic.
\end{pro}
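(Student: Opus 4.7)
The plan is to prove the vector statement first---this is where the real work lives---and then read off the function-theoretic conclusion via Courant--Fischer. Reorder so that $\beta_1 \leq \beta_2 \leq \cdots \leq \beta_n$; the claim then reduces to $\beta_1 + \cdots + \beta_{n-k+1} \geq 0$, since this sum is the smallest among all sums of $n-k+1$ coordinates.

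Suppose for contradiction that $\beta_1 + \cdots + \beta_{n-k+1} = -s < 0$. The cone $\Gamma_k$ is invariant under coordinate permutations (each $\sigma_j$ is symmetric) and is convex (a classical consequence of Garding's theory of hyperbolic polynomials). Averaging $\beta_1,\dots,\beta_{n-k+1}$ to their common value $a := -s/(n-k+1) < 0$, and $\beta_{n-k+2},\dots,\beta_n$ to their common value $b$, therefore yields a vector $\tilde\beta = (a,\dots,a,b,\dots,b)$ that still belongs to $\Gamma_k$. From $\sigma_1(\tilde\beta) > 0$ together with $a < 0$ one immediately reads off $b > 0$.

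Now examine the generating polynomial
\[
P(x) = \prod_{i=1}^n (1 + \tilde\beta_i x) = (1+ax)^{n-k+1}(1+bx)^{k-1} = \sum_{j=0}^n \sigma_j(\tilde\beta)\, x^j.
\]
Its positive real roots are exactly $x = -1/a$ with multiplicity $n-k+1$, so $P$ has precisely $n-k+1$ positive roots counted with multiplicity. By Descartes' rule of signs this is at most the number of sign changes in the coefficient sequence $(\sigma_j(\tilde\beta))_{j=0}^n$. But $\sigma_0 = 1 > 0$ and $\sigma_1(\tilde\beta),\dots,\sigma_k(\tilde\beta) > 0$ because $\tilde\beta\in\Gamma_k$, so no sign change occurs among the first $k+1$ entries, and at most $n-k$ sign changes can appear in the remaining transitions $\sigma_k\to\sigma_{k+1},\dots,\sigma_{n-1}\to\sigma_n$. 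This forces $n-k+1 \leq n-k$, a contradiction. The sign-change bookkeeping is the crux of the plan; everything else is either symmetrization or a standard eigenvalue estimate.

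For the second assertion, let $u$ be $\mathcal C^2$ and $m$-subharmonic, so the eigenvalue vector of its complex Hessian lies in $\overline{\Gamma_m}$ pointwise; by continuity the first part gives $\lambda_1 + \cdots + \lambda_{n-m+1} \geq 0$ for the sorted eigenvalues at each point. For any affine complex $(n-m+1)$-plane $L$, the Laplacian of $u|_L$ equals the trace of the restriction of the complex Hessian to $L$, which the Courant--Fischer min-max bounds below by $\lambda_1 + \cdots + \lambda_{n-m+1} \geq 0$. Hence $u|_L$ is subharmonic and $u$ is $(n-m+1)$-plurisubharmonic.
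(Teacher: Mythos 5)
Your proof is correct, and it is worth noting that the paper itself offers no argument to compare against: Proposition \ref{vector} is quoted there as a classical fact without proof. What you supply is a complete substitute, and it follows a genuinely non-standard route. The textbook argument iterates the classical fact that for $\lambda\in\Gamma_k$ one has $\sigma_j(\lambda|\lambda_i)>0$ for all $i$ and all $j\le k-1$, so deleting any single coordinate lands in $\Gamma_{k-1}(\mathbb R^{n-1})$; after deleting any $k-1$ coordinates one sits in $\Gamma_1(\mathbb R^{n-k+1})$, which is exactly the assertion. Your route instead combines the permutation invariance of $\Gamma_k$ with G\aa rding's convexity theorem to symmetrize within the two blocks (the averaged vector is indeed a convex combination of block permutations of $\beta$, so it stays in $\Gamma_k$), and then extracts the contradiction from Descartes' rule of signs applied to $(1+ax)^{n-k+1}(1+bx)^{k-1}$: the positive root $-1/a$ has multiplicity $n-k+1$, while $\sigma_0,\dots,\sigma_k>0$ leaves at most $n-k$ sign changes. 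This is shorter and rather elegant, at the price of invoking G\aa rding's theorem, a much heavier input than the statement requires. Three small points to tidy up, none of them gaps: the sentence deducing $b>0$ tacitly assumes $k\ge2$ (for $k=1$ the contradiction $\sigma_1(\tilde\beta)=na<0$ is immediate, so nothing is lost); the trace estimate you attribute to Courant--Fischer is really the Ky Fan minimum principle, or equivalently Cauchy interlacing applied to the eigenvalues of the compression and then summed; and the passage from the open cone to $m$-subharmonic functions, where only $\sigma_j\ge0$ is available, uses the standard fact that $\{\sigma_j\ge 0,\ j=1,\dots,m\}$ is the closure of $\Gamma_m$ (perturb by $\varepsilon\mathbf{1}$ and let $\varepsilon\to0$), which deserves one explicit line rather than the bare words ``by continuity''.
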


The following summation formula is easy to prove:
\begin{pro}[Summation formula]\label{sumf}
 For a vector $\g\in\mathbb R^{n-1}$ let $\al$ denotes the vector in $\mathbb R^p$ formed by the first $p$-coordinates of $\g$, while
 $\be$- the vector formed by the remaining coordinates. Then
 $$\sigma_{j}(\g)=\sum_{i=0}^j\s_i(\al)\s_{j-i}(\be).$$
\end{pro}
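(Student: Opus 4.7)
The plan is to prove this standard identity by the generating-function argument, which is the cleanest way to handle the edge cases coming from the conventions $\sigma_0=1$ and $\sigma_j(\mu)=0$ when $j$ exceeds the length of the vector $\mu$.

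First I would recall that for any vector $\mu=(\mu_1,\dots,\mu_q)\in\mathbb{R}^q$ the generating identity
$$\prod_{k=1}^{q}(1+\mu_k t)=\sum_{j=0}^{q}\sigma_j(\mu)\,t^{j}$$
holds as a polynomial identity in $t$. This follows immediately by expanding the left-hand side and collecting terms of each degree, since choosing $\mu_{k_1}\cdots\mu_{k_j}t^j$ corresponds to selecting a $j$-element subset $\{k_1<\cdots<k_j\}\subset\{1,\dots,q\}$.

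Next I would apply this to $\gamma\in\mathbb{R}^{n-1}$ and split the product at position $p$:
$$\sum_{j=0}^{n-1}\sigma_j(\gamma)t^{j}=\prod_{k=1}^{n-1}(1+\gamma_k t)=\prod_{k=1}^{p}(1+\alpha_k t)\cdot\prod_{k=1}^{n-1-p}(1+\beta_k t)=\Bigl(\sum_{i=0}^{p}\sigma_i(\alpha)t^{i}\Bigr)\Bigl(\sum_{l=0}^{n-1-p}\sigma_l(\beta)t^{l}\Bigr).$$
Comparing the coefficient of $t^j$ on the two sides yields
$$\sigma_j(\gamma)=\sum_{i+l=j}\sigma_i(\alpha)\sigma_l(\beta)=\sum_{i=0}^{j}\sigma_i(\alpha)\sigma_{j-i}(\beta),$$
where the extension of the summation range from $0\le i\le p$, $0\le l\le n-1-p$ to the full interval $0\le i\le j$ is harmless because of the paper's convention that $\sigma_r$ vanishes on vectors of length strictly less than $r$.

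There is no real obstacle here; the only subtlety is the bookkeeping for the boundary terms $i=0$ and $i=j$ and for the cases when $j>p$ or $j>n-1-p$, which is precisely what the vanishing convention for $\sigma_r$ takes care of. Alternatively, one can give the same proof combinatorially: a $j$-element subset of $\{1,\dots,n-1\}$ is uniquely determined by the pair consisting of its intersection with $\{1,\dots,p\}$ (of some size $i$) and its intersection with $\{p+1,\dots,n-1\}$ (of size $j-i$), and summing the corresponding products over $i$ reproduces the claimed identity.
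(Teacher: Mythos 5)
Your proof is correct and complete; the paper itself states this proposition without proof (it is introduced only as ``easy to prove''), so there is nothing to compare against. The generating-function argument you give, factoring $\prod_{k}(1+\gamma_k t)$ at position $p$ and comparing coefficients of $t^j$, is the standard route, and you correctly handle the edge cases via the convention that $\sigma_r$ vanishes on vectors of length less than $r$; the combinatorial bijection you sketch at the end is an equally valid alternative.
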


We refer to \cite{Bl1} or \cite{Wa2} for further properties of these cones.

Recall that the operator $\mathcal M_m$ is given by
$$\mathcal M_m(u):=\Pi_{1\leq j_1<j_2<\cdots<j_m\leq n}(\lambda_{j_1}+\lambda_{j_2}+\cdots+\lambda_{j_m}).$$
As $\mathcal M_u$ is defined through a symmetric polynomial of the eigenvalues of $\frac{\pa^2 u}{\pa z_j\pa\bar{z}_k}(z)$ it follows from the fundamental theorem of symmetric polynomials that it can be expressed through
$\sigma_p(\lambda)=\s_p(\frac{\pa^2 u}{\pa z_j\pa\bar{z}_k}(z))$, $p=1,\cdots,m$. One observes that $\mathcal M_1(u)=\sigma_n(\frac{\pa^2 u}{\pa z_j\pa\bar{z}_k}(z))$ is the complex Monge-Amp\`ere operator,
$\mathcal M_n(u)=\s_1(\frac{\pa^2 u}{\pa z_j\pa\bar{z}_k}(z))$ is simply the Laplacian. The expression of concrete $n$ and $m$ can be complicated- in particular for $m=2$ and $n=3$ it can be computed that $\mathcal M_2(u)=\s_1(u)\s_2(u)-\s_3(u)$.

\section{A-$m$-subharmonicity versus B-$m$-subharmonicity}

Recall that a (smooth) function $u$ is $m$-subharmonic if $\s_j(\frac{\pa^2u}{\pa z_j\pa\bar{z}_k}(z))\geq0$ for every $j=1,\cdots,m$ and every point in the domain of definition of $u$. These are called B-$(n-m+1)$-subharmonic in \cite{Sa}, a terminology that we shall apply in this section.

A smooth function is $A$-m-sh if it is $m$-plurisubharmonic and satisfies $\s_{n-m+1}(\frac{\pa^2u}{\pa z_j\pa\bar{z}_k}(z))\geq0$.

Note that $B-m-sh\subset A-m-sh$ thanks to Proposition \ref{vector} and if there were an equality that would mean that checking $m$-subharmonicity reduces to checking that the $m$-Hessian is positive (a thing which in potential theory is usually given a priori) and furthermore
$$i\pa\bar{\pa}u\wedge( i\pa\bar{\pa}||z||^2)^{n-m}\geq 0$$ 
which is a linear condition.

In \cite{Sa} it was shown that  if $m=2$ then for every $n$ both notions indeed coincide. More generally they coincide for functions with at most one non-positive eigenvalue.

In this section we solve Sadullaev's problem. More precisely we prove that in a domain $\Om\subset\mathbb C^n$ Blocki's notion 
of $m$-subharmonic functions agrees with the one of Abdullaev provided that $n\leq 7$. We also show that this fails in large dimensions.
\begin{thm}\label{1}
Let $u\in\mathcal C^2(\Om)$, where $\Om\subset\mathbb C^n,\ n\leq 7$. Then $u$ is $n-k+1$-subharmonic (or B-$k$-subharmonic) if and only if it satisfies
$$i\pa\bar{\pa}u\we( i\pa\bar{\pa}||z||^2)^{k-1}\geq 0,\ \ \ (i\pa\bar{\pa}u)^{n-k+1}\we( i\pa\bar{\pa}||z||^2)^{k-1}\geq 0.$$
\end{thm}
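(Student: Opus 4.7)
The containment $B$-$k$-sh $\subset$ $A$-$k$-sh is immediate: $u$ being $(n-k+1)$-subharmonic means its eigenvalue vector $\lambda$ lies in $\overline{\Gamma}_{n-k+1}$, so Proposition~\ref{vector} gives the non-negativity of every $k$-sum of entries of $\lambda$ (the first inequality), while the second inequality $\sigma_{n-k+1}(\lambda)\ge 0$ is built into the definition. The content of the theorem lies in the reverse inclusion.

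I work pointwise. Let $\lambda \in \mathbb{R}^n$ be the eigenvalues of the complex Hessian at a point and set $m=n-k+1$. I must show that (i) every sum of $k$ entries of $\lambda$ is $\ge 0$ and (ii) $\sigma_m(\lambda)\ge 0$ together imply $\sigma_j(\lambda)\ge 0$ for every $j\le m$. Condition (i) forces $\lambda$ to have $r\le k-1$ strictly negative entries; I decompose $\lambda=(-a_1,\dots,-a_r,\mu_1,\dots,\mu_{n-r})$ with $a_i>0$, $\mu_j\ge 0$, and record the quantitative bound $a_1+\cdots+a_r \le \mu_{(1)}+\cdots+\mu_{(k-r)}$, where $\mu_{(\cdot)}$ are order statistics. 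By the summation formula (Proposition~\ref{sumf}),
$$
\sigma_j(\lambda)=\sum_{l=0}^{\min(j,r)}(-1)^l e_l\,f_{j-l},\qquad e_l:=\sigma_l(a_1,\dots,a_r)\ge 0,\ f_l:=\sigma_l(\mu_1,\dots,\mu_{n-r})\ge 0,
$$
and hypothesis (ii) is this identity at $j=m$.

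The plan is to propagate (ii) down to smaller $j$ using Newton's inequality (Proposition~\ref{N1}) applied to the non-negative vector $\mu$: the ratios $R_j:=f_j/f_{j-1}$ are strictly decreasing in $j$ with explicit contraction factor $c_j=j(n-r-j)/((j+1)(n-r-j+1))<1$. Combined with Maclaurin's inequality (Proposition~\ref{McL1}) and Newton applied to $a$, this monotonicity together with the sum bound is used to show that at each $j<m$ the positive (even-index) contributions in the expansion of $\sigma_j(\lambda)$ dominate the negative (odd-index) ones. For $r\le 1$ the argument collapses to the one-step comparison $R_j\ge R_m\ge a_1$, which works in every dimension and recovers Sadullaev's $k=2$ result; for $r\ge 2$ the alternating sum has at least three terms and each cross-product $e_l f_{j-l}$, $l\ge 2$, must be absorbed separately.

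The main obstacle is the extremal case $r=k-1$ with the sum bound saturated (the smallest $\mu$'s equal to $\sum a_i$) and $\mu$ concentrated, where the alternating sum is longest and the Newton margins are simultaneously tight. The argument reduces there to a concrete polynomial inequality in the $\mu$'s whose validity depends on the dimension through the constants $c_j$ and the binomials $\binom{n-r}{j}$. I expect these numerics to just barely close the inequality when $n\le 7$; for larger $n$ the length of the alternating sum outruns the Newton contraction, and the remainder of the paper will construct explicit counterexamples for $n\ge 11$. Essentially all the delicate work of the proof should be concentrated in this extremal configuration.
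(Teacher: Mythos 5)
There is a genuine gap. Your setup (decompose $\lambda$ into negative entries $-a_i$ and nonnegative entries $\mu_j$, note that hypothesis (i) is equivalent to $a_1+\cdots+a_r$ being at most the sum of the $k-r$ smallest $\mu$'s, expand $\sigma_j(\lambda)$ by the summation formula, and hope to control the alternating sum by Newton/Maclaurin) is a reasonable start and parallels the opening of the actual proof, but the proof stops exactly where the content of the theorem begins. You never derive a concrete inequality, never verify any numerics, and explicitly defer the decisive step (``I expect these numerics to just barely close the inequality when $n\le 7$''); that dimension-dependent verification \emph{is} the theorem. Two further points are not merely unexecuted but unjustified as stated. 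First, the reduction to the single ``extremal configuration'' ($r=k-1$, saturated sum bound, concentrated $\mu$) is asserted with no argument; positivity of $\sigma_j(\lambda)$ is not monotone or convex in the configuration in any evident way, so checking one extremal case does not suffice without a genuine reduction. Second, it is never made precise how hypothesis (ii) enters at levels $j<m$: your term-by-term domination scheme, as described, only invokes (i) and the Newton ratios of $\mu$, yet (i) alone cannot give $\sigma_j\ge 0$ for $j$ close to $m$ (already for $\lambda=(-1,1,1)$ in $\mathbb C^3$, $k=2$, all $2$-sums are $\ge 0$ but $\sigma_2=-1$), so the mechanism by which (ii) propagates downward must be spelled out and is not.

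For comparison, the paper's proof resolves both issues by peeling off the smallest eigenvalue (normalized to $-1$), writing $\eta=(-1,\gamma)$, so that $\sigma_j(\eta)=\sigma_j(\gamma)-\sigma_{j-1}(\gamma)$. Hypothesis (ii) is then used exactly once, at the top level, to get $\sigma_{n-k+1}(\gamma)\ge\sigma_{n-k}(\gamma)$, and the weak Newton inequality runs this chain downward \emph{provided} $\gamma\in\Gamma_{n-k}$. The remaining task, $\sigma_j(\gamma)>0$ for $j\le n-k$, uses only (i): averaging the nonnegativity of $k$-sums over all admissible tuples yields
\begin{equation*}
\sigma_j(\gamma)\ \ge\ \sigma_{j-1}(\beta)\,(-\sigma_1(\alpha))\Bigl[\frac{(n-p-j)(p+1)}{j(k-p-1)p}-1\Bigr],
\end{equation*}
and the bracketed factor is nonnegative for all admissible $(p,k,n)$ precisely when $n\le 7$; this explicit combinatorial bound, absent from your proposal, is where the dimension restriction actually enters. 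Until you produce an analogous explicit inequality and check it for $n\le 7$, your argument establishes only the easy inclusion.
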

\begin{proof}
 If $u$ is $(n-k+1)$-subharmonic then it satisfies Abdullaev's conditions by Proposition \ref{vector} (see also \cite{Sa}). In order to prove
 the reverse implication we argue at a fixed point $z_0\in\C^n$. By a complex linear change of coordinates we can assume that
 the complex Hessian of $u$ is diagonal at $z_0$.

 Observe first that the case $k=n-1$ is trivial and the case $k=2$ was done in \cite{Sa}.

 If all the eigenvalues are non negative they obviously form a vector in $\G_{n-k+1}$ and there is nothing to prove. Thus we suppose that there
 is a negative smallest eigenvalue (called $\alpha_0$) which, afer scaling if necessary we assume to be equal to $-1$. 
 Let $-1=\al_0\leq\al_1\leq\cdots\leq \al_p<0$ denote all the negative eigenvalues. If $p=0$ then the proof from
 \cite{Sa} works, hence we assume $p\geq1$ in what follows. Similarly let
 $0\leq \be_1\leq\cdots\leq\be_{n-1-p}$ denote the nonnegative eigenvalues.
 
 We denote by $\g$ the vector $\g:=(\al,\be)$ with first $p$-coordinates equal to $\al_j, j=1,\cdots, p$ and last $(n-p-1)$-
 coordinates equal to $\be_j, j=1,\cdots, n-p-1$. Similarly we define $\et:=(-1,\g)$.
 
 Our goal is to prove that for $n\leq 7$
 \begin{equation}\label{eta}
  \et\in\G_{n-k+1}\ \ {\rm i.e.}\ \ \s_j(\et)\geq0,\ \ j=1,\cdots,n-k-1.
 \end{equation}
Note that $(i\pa\bar{\pa}u)^{n-k+1}\we(i\pa\bar{\pa}||z||^2)^{k-1}\geq 0$ at $z_0$ can be rewritten in the language of eigenvalues as
\begin{equation}\label{n-k+1}
 0\leq\s_{n-k+1}(\et)=\s_{n-k+1}(\et|\et_1)+\et_1\s_{n-k}(\et|\et_1)=\s_{n-k+1}(\g)-\s_{n-k}(\g).
\end{equation}

Note that for any $j=1,\cdots, n-k-1$
$$\s_j(\et)=\s_j(\g)-\s_{j-1}(\g)$$ 
thus it suffices to prove
\begin{equation}\label{g}
{\rm For\ every}\ j\in 1,\cdots, n-k-1\ {\rm the\  inequality}\ \s_{j}(\g)-\s_{j-1}(\g)\geq 0\ {\rm holds.}
\end{equation}

The condition $(i\pa\bar{\pa}u)\we(i\pa\bar{\pa}||z||^2)^{k-1}\geq 0$ means that the sum of any $k$-tuple of eigenvalues is nonnegative. 
Note that this in particular implies
that $p\leq k-2$.

We claim that it suffices to prove that $\s_j(\g)> 0, j=1,\cdots,n-k$ i.e. $\g\in\G_{n-k}$. Indeed suppose this were true.

Then from (\ref{n-k+1}) $\s_{n-k+1}(\g)\geq\s_{n-k}(\g)\geq 0$, and from Proposition \ref{N2}  we have
$$\s_{n-k}(\g)^2\geq \s_{n-k+1}(\g)\s_{n-k-1}(\g)\geq\s_{n-k}(\g)\s_{n-k-1}(\g).$$
Exploiting the positivity once again we end up with
$$\s_{n-k}(\g)\geq \s_{n-k-1}(\g).$$
Repeating the argument we obtain $\s_{n-k-1}(\g)\geq \s_{n-k-2}(\g)$ and so on up until $\s_{1}(\g)\ge \s_{0}(\g)$.

Let us proceed with the proof of the claim.

As $i\pa\bar{\pa}u\we(i\pa\bar{\pa}||z||^2)^{k-1}\geq 0$ we obtain that $i\pa\bar{\pa}u\we(i\pa\bar{\pa}||z||^2)^{n-1}\geq 0$ i.e. $\s_1(\g)\geq 1$ (recall that $\et_1=-1$).

From Proposition \ref{sumf} we know that

$$\s_j(\g):=\sum_{i=0}^{min\{j,p\}}\s_{j-i}(\be)\s_{i}(\al).$$

Recall that the case $k=n-1$ is trivial. Thus we assume from now on that $k\leq n-2\leq 5$ and hence $p\leq 3$.
In fact $min\{j,p\}\leq 2$- if
$p\geq3$ then $n=7, k=5$ which yields $j\leq 2$. Thus
$$\s_j(\g)=\s_j(\be)+\s_1(\al)\s_{j-1}(\be)+\s_2(\al)\s_{j-2}(\be),$$
where the last term is assumed to be zero if $j=1$ or $p=1$. Observe that, whenever defined, this last term is always non negative
hence we have the fundamental inequality
\begin{equation}\label{fundeq}
\s_j(\g)\geq\s_j(\be)+\s_1(\al)\s_{j-1}(\be).
\end{equation}

Observe that
\begin{equation}\label{divisionofb}
 \s_j(\be)=\frac1j(\sum_{1\leq l_1<\cdots<l_{j-1}\leq n-p-1}\be_{l_1}\cdots \be_{l_{j-1}}
 [\sum_{l\notin\{l_1,\cdots l_{j-1}\}}\be_l]).
\end{equation}
On the other hand for any $(k-p-1)$-tuple $1\leq r_1<\cdots r_{k-p-1}\leq n-p-1$
the sum $-1+\s_1(\al)+\sum_{s=1}^{k-p-1}\be_{r_s}$ is nonnegative by assumption. Summing over all $(k-p-1)$-tuples such that
$$\{r_1,\cdots,r_{k-p-1}\}\cap \{l_1,\cdots,l_j\}=\varnothing$$
we obtain

$$\binom{n-p-j}{k-p-1}(-1+\s_1(\al))+\binom{n-p-j-1}{k-p-2}\sum_{l\notin\{l_1,\cdots l_{j-1}\}}\be_l\geq 0.$$
Coupling this with the elementary inequlity $\s_1(\al)\geq -p$ (since all $\al_j\geq -1$) we obtain
\begin{equation}\label{helpful}
\sum_{l\notin\{l_1,\cdots l_{j-1}\}}\be_l\geq \frac{(n-p-j)(p+1)}{(k-p-1)p}(-\s_1(\al)). 
\end{equation}
Summing over in equation (\ref{divisionofb}) we get
$$\s_j(\be)\geq\s_{j-1}(\be)(-\s_1(\al))\frac{(n-p-j)(p+1)}{j(k-p-1)p}$$

Thus the fundamental inequality (\ref{fundeq}) yields
\begin{equation}\label{bin}
 \s_j(\g)\geq \s_{j-1}(\be)(-\s_1(\al))[\frac{(n-p-j)(p+1)}{j(k-p-1)p}-1].
\end{equation}
Hence $\s_j(\g)\geq 0$ provided 
$$\frac{(n-p-j)(p+1)}{j(k-p-1)p}\geq 1.$$
The quantity on the left is clearly decreasing in $j$, hence it is smallest for $j=n-k$ and then reads
$$\frac{(k-p)(p+1)}{(n-k)(k-p-1)p}.$$
It is then straightforward to check that the latter quantity is indeed at least $1$ for all triples $(p,k,n)\in\mathbb N^3$ such that
$1\leq p\leq k-2, k\leq n-2, n\leq 7$.

\end{proof}
The following example shows that Blocki's and Abdullaev's notions are different in large dimensions even for $k=3$:
\begin{exa}
 Consider the function 
 $$u(z_1,\cdots,z_{11})=-\sum_{j=1}^2|z_j|^2+2\sum_{j=3}^{11}|z_j|^2.$$
 Then $(i\pa\bar{\pa}u)\we(i\pa\bar{\pa}||z||^2)^{2}\geq 0, (i\pa\bar{\pa}u)^{9}\we(i\pa\bar{\pa}||z||^2)^{2}\geq 0$, but $(i\pa\bar{\pa}u)^{8}\we(i\pa\bar{\pa}||z||^2)^{3}<0$. i.e. $u$ is $m$-sh in the sense of Abdullaev but 
 not in the sense of Blocki.
\end{exa}
\begin{proof}
 By computation 
 $$\s_9(\la(\frac{\partial^2 u}{\partial z_j\partial\bar{z}_k}))(z)=\s_2(-1,-1,2,2,\cdots,2)=2^9-2.9.2^8+36.2^7>0$$
 but
 $$\s_8(-1,-1,2,\cdots,2)=9.2^8-2.36.2^7+84.2^6=-24.2^6,$$
 as claimed.
\end{proof}

\begin{section}{Failure of the integral comparison principle}
Recall that an elliptic operator $F(\frac{\pa^2u}{\pa z_p\pa \bar{z}_q})$ is said to satisfy the integral comparison principle if for any two $\mathcal C^2$ admissible functions $u$ and $v$ defined in a domain $\Omega\subset\mathbb C^n$ one has

$$\int_{\lbr u<v\rbr}F(\frac{\pa^2v}{\pa z_p\pa \bar{z}_q})\leq \int_{\lbr u<v\rbr}F(\frac{\pa^2v}{\pa z_p\pa \bar{z}_q})$$
provided $u\geq v$ on $\pa\Omega$. Classical examples include the Laplacian and the complex Monge-Amp\`ere operator restricted to the class of plurisubharmonic functions. Recall that in \cite{BT} the validity of the comparison principle has been extended to all locally bounded plurisubharmonic functions. 

Such an inequality would have been very helpful in developing a version pluripotential theory associated to $\mathcal M_m$. On the other hand if one wants Chern-Levine-Nirenberg inequalities to hold (see \cite{BT})- which is again a basic property in pluripotential theory allowing in particular to define relative capacities, it is more natural to consider the operator $\mathcal M_m^{\al}$ with the exponent $\alpha$ chosen properly.

Unfortunately our next result shows that this is impossible (for every choice of $\alpha$) unless $p=1$ or $p=n$:
 \begin{thm}\label{compprin} Suppose that the operator $\mathcal M_m^{\al}(u)$ satisfies the integral comparison principle. Then $\al=1$ and furthermore $m=1$ or $m=n$.
\end{thm}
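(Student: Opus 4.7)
The plan is to convert the hypothesized integral CP into a pointwise divergence inequality via perturbation, and then contract this inequality against a specific test function to extract a scalar identity in the Hessian eigenvalues that can be checked case by case.

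\textbf{Step 1 (linearization).} Fix a smooth strictly $m$-psh $u$ on $\Om$ (e.g.\ $u(z)=A\|z\|^2$ with $A>0$ large) and $\chi\in C_c^\infty(\Om)$. For $|s|$ small, $v_s:=u+s\chi$ is strictly $m$-psh and agrees with $u$ on $\pa\Om$. Applying the hypothesized CP to $(u,v_s)$ and to $(v_s,u)$ gives the two one-sided bounds
\begin{align*}
\int_{\{s\chi>0\}}\bigl[\mathcal M_m^\al(v_s)-\mathcal M_m^\al(u)\bigr]\,dV&\le 0,\\
\int_{\{s\chi<0\}}\bigl[\mathcal M_m^\al(v_s)-\mathcal M_m^\al(u)\bigr]\,dV&\ge 0,
\end{align*}
which, after dividing by $s$, sending $s\to 0^\pm$, specializing to $\chi\ge 0$, and two integrations by parts, convert to the pointwise inequality
$$D_\al(u):=\sum_{i,j}\pa_i\pa_{\bar j}\bigl[\al\mathcal M_m^{\al-1}(u)F^{ij}(u)\bigr]\le 0,\quad F^{ij}(u):=\frac{\pa\mathcal M_m}{\pa u_{i\bar j}},$$
valid for every smooth strictly $m$-psh $u$ on $\Om$.

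\textbf{Step 2 (algebraic reduction).} Substitute $u=Q+\et g$ with $Q(z)=\sum_k\mu_k|z_k|^2$ a diagonal quadratic having $\mu\in\G_m$ strictly interior and $g\in C^\infty(\Om)$ arbitrary. Since $D_\al(Q)=0$, the order-$\et$ term of $D_\al(Q+\et g)$ is linear in $g$; as $D_\al\le 0$ must hold for $\et$ of both signs, this first-order term must vanish identically in $g$. The arbitrariness of $g$ then forces the symmetrization (in $(i,p)$ and $(j,q)$) of
$$T^{ij,pq}(\mu):=(\al-1)F^{ij}(\mu)F^{pq}(\mu)+\mathcal M_m(\mu)\frac{\pa^2\mathcal M_m}{\pa u_{i\bar j}\pa u_{p\bar q}}(\mu)$$
to vanish identically in $\mu\in\G_m$.

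\textbf{Step 3 (contraction and case analysis).} Contracting the vanishing symmetrized tensor against the fourth derivative of the test $g=Q^2$ yields a scalar polynomial identity $A(\mu,\al)\equiv 0$ in $\mu\in\G_m$, made explicit by the Euler relations $\sum_i\mu_iF^{ii}=N\mathcal M_m$ and $\sum_{ip}\mu_i\mu_pF^{ii,pp}=N(N-1)\mathcal M_m$ (with $N:=\binom nm$), together with the standard second-order eigenvalue perturbation formula at a diagonal matrix giving $F^{ij,ji}(\mu)=-\mathcal M_m(\mu)\sum_{J\ni i,\,J\not\ni j}(S_JS_{\tilde J})^{-1}$ for $i\ne j$, where $S_J=\sum_{k\in J}\mu_k$ and $\tilde J=J\triangle\{i,j\}$. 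For $m=1$ this identity simplifies to $(\al-1)n(n+1)\mathcal M_1^2(\mu)\equiv 0$, forcing $\al=1$; for $m=n$ one has $N=1$ and the identity reduces to $(\al-1)[\mathcal M_n^2(\mu)+\sum_i\mu_i^2]\equiv 0$, again forcing $\al=1$. For $1<m<n$, the identity is \emph{genuinely $\mu$-dependent}: already in the smallest case $m=2,n=3$ one computes that $A(\mu,\al)=0$ forces $\al=\tfrac12$ at $\mu=(1,1,1)$ but $\al=\tfrac{37}{73}$ at $\mu=(1,1,2)$, an incompatibility that rules out any choice of $\al$. This gives the desired contradiction for all $1<m<n$ (the general case reducing to $m=2,n=3$ by restriction of $\mu$ to a suitable 2-plane).

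\textbf{Main obstacle.} The hardest step is verifying that the scalar identity becomes genuinely $\mu$-dependent for every pair $(m,n)$ with $1<m<n$; the smallest case is an elementary polynomial computation, but extending the argument uniformly requires either a structural analysis of the coefficient tensor or an explicit reduction via restriction to 2-parameter subfamilies of $\mu$, and the combinatorics of the closed formula for $F^{ij,ji}$ make this the bookkeeping-intensive piece.
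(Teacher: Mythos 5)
Your linearization strategy is coherent and, where you actually carried out the computations, correct: reducing the comparison principle to the pointwise inequality $D_\al(u)\le 0$, then to the vanishing of the symmetrized tensor $T^{ij,pq}(\mu)$ on the open cone, is legitimate (and Step 1 is even easier than you make it, since with $\chi\ge 0$ compactly supported the global form of the principle lets you integrate by parts over all of $\Om$ and avoid the boundary of $\{\chi>0\}$); your formula for $F^{ij,ji}$ at a diagonal matrix is right, and I verified that the forced values are indeed $\al=1$ for $m=1$ and $m=n$, and $\al=\tfrac12$ at $\mu=(1,1,1)$ versus $\al=\tfrac{37}{73}$ at $\mu=(1,1,2)$ when $(m,n)=(2,3)$. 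The genuine gap is exactly the point you flag at the end: for general $1<m<n$ you only assert that the scalar identity $A(\mu,\al)\equiv 0$ is inconsistent, and the proposed reduction ``to $m=2,\ n=3$ by restriction of $\mu$ to a suitable 2-plane'' does not exist in any evident form. The operator $\mathcal M_m$ in $\C^n$ is a product over all $\binom nm$ eigenvalue sums, and no slicing of the eigenvalue vector (e.g.\ freezing $n-3$ of the $\mu_k$) turns it into $\mathcal M_2$ in $\C^3$: all $\binom nm$ factors survive, so the incompatibility computed for $(2,3)$ does not transfer. As written, the argument proves the theorem only for $m\in\{1,n\}$ and for the single pair $(m,n)=(2,3)$; for every other pair with $1<m<n$ the decisive step is an unproved claim, and closing it inside your framework would require evaluating $A(\mu,\al)$ on two explicit families, say $\mu=\1$ and $\mu=(t,1,\dots,1)$, for arbitrary $(m,n)$ and showing the forced $\al$'s disagree --- precisely the combinatorial work you defer.

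It is worth noting how the paper sidesteps this uniformity problem altogether: instead of a pointwise identity it tests the (global) comparison principle against two explicit families. Perturbed quadratics $\sum_j a_j|z_j|^2+\e\chi$ with $a_n,\dots,a_{m+1}$ sent to infinity one at a time (via a CNS-type eigenvalue lemma) reduce the inequality to an $n$-independent one-dimensional statement whose second-order expansion in $\e$ forces $\al\ge 1$; radial plurisubharmonic functions $\chi_A(\|z\|^2)$ with $A\to\infty$ give, by a closed-form computation of both integrals, $\al\le 1/\binom{n-1}{m-1}$. The two bounds together yield $\al=1$ and $\binom{n-1}{m-1}=1$ for all $(m,n)$ at once. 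So your route is genuinely different and could in principle give finer information (it pins down the would-be exponent pointwise), but to be a complete proof it still needs the general-$(m,n)$ inconsistency argument that the paper's choice of test families renders unnecessary.
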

 Before starting the proof  we need a lemma that is a minor generalization of Lemma 1.2 in \cite{CNS}:
\begin{lemma}\label{linalg}
Consider the real valued function
$$\rho(z):=\chi(z)+\sum_{j=1}^na_j|z_j|^2,$$
where $\chi$ is any $C^2$ function  $a_j>0, a_1<a_2<\cdots<a_n$ and $a_j\leq C$ for $j=1,\cdots, n-1$ for some constant $C$. Then, assuming that $a_n$ tends to infinity, 
$(\frac{\pa^2\rho}{\pa z_p\pa \bar{z}_q})_{p,q=1,\cdots, n}$ has eigenvalues  $\lambda_j((\frac{\pa^2\rho}{\pa z_p\pa \bar{z}_q})_{p,q=1}^n)$, $j=1,\cdots,n$ at a fixed point $z$ satisfying
$$\lambda_j((\frac{\pa^2\rho}{\pa z_p\pa \bar{z}_q})_{p,q=1}^n)=\tilde{\lambda}_j((\frac{\pa^2\rho}{\pa z_p\pa \bar{z}_q})_{p,q=1}^{n-1})+o(1)$$
 for $1\leq j\leq  n-1$, while 
$$\lambda_n=a_{n}+\chi_{n\bar{n}}+o(1).$$ All $o(1)$ terms are uniform and depend on $C$ and the $C^2$ bound on $\chi$.
\end{lemma}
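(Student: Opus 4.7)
The plan is to regard $M := \bigl(\frac{\partial^{2}\rho}{\partial z_{p}\partial\bar z_{q}}\bigr)_{p,q=1}^{n}$ as a perturbation of the block-diagonal matrix $\operatorname{diag}(M',m)$, where $M'$ is the $(n-1)\times(n-1)$ Hessian in the first $n-1$ variables, $m := a_{n}+\chi_{n\bar n}$, and $v := (\chi_{p\bar n})_{p=1}^{n-1}$ is the off-diagonal coupling column. Under the hypotheses $\|M'\|$ and $\|v\|$ stay uniformly bounded by a constant $K$ depending only on $C$ and the $C^{2}$-norm of $\chi$, while $m\to\infty$. The lemma then asserts that in this spectral-gap regime the spectrum of $M$ decouples into that of $M'$ plus the isolated eigenvalue $m$, with vanishing error.

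The main tool will be the Cauchy interlacing theorem: denoting the increasing eigenvalues of $M$ by $\lambda_{1}\leq\cdots\leq\lambda_{n}$ and those of $M'$ by $\tilde\lambda_{1}\leq\cdots\leq\tilde\lambda_{n-1}$, one has $\lambda_{j}\leq\tilde\lambda_{j}\leq\lambda_{j+1}$ for $j=1,\dots,n-1$. Set $\epsilon_{j}:=\tilde\lambda_{j}-\lambda_{j}\geq 0$ for $j\leq n-1$ and $\epsilon_{n}:=\lambda_{n}-m$. The trace identity $\operatorname{tr}(M)=\operatorname{tr}(M')+m$ gives $\sum_{j=1}^{n-1}\epsilon_{j}=\epsilon_{n}$ (in particular $\epsilon_{n}\geq 0$), while the entry-by-entry identity $\operatorname{tr}(M^{2})=\operatorname{tr}(M'^{2})+m^{2}+2\|v\|^{2}$ rearranges, via $\tilde\lambda_{j}+\lambda_{j}=2\tilde\lambda_{j}-\epsilon_{j}$, to
$$\epsilon_{n}(\lambda_{n}+m) \;=\; 2\|v\|^{2} \;+\; 2\sum_{j=1}^{n-1}\epsilon_{j}\tilde\lambda_{j} \;-\; \sum_{j=1}^{n-1}\epsilon_{j}^{2}.$$
Since $|\tilde\lambda_{j}|\leq K$, the middle sum is bounded by $2K\epsilon_{n}$, and discarding the squared remainder gives $\epsilon_{n}(\lambda_{n}+m-2K)\leq 2\|v\|^{2}$. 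Combining with $\lambda_{n}\geq m$ yields $\epsilon_{n}\leq \|v\|^{2}/(m-K)=o(1)$ as $a_{n}\to\infty$, which proves $\lambda_{n}=m+o(1)$. The sandwich $0\leq\epsilon_{j}\leq\sum_{k}\epsilon_{k}=\epsilon_{n}=o(1)$ then delivers $\lambda_{j}=\tilde\lambda_{j}+o(1)$ for every $j\leq n-1$.

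The main conceptual obstacle, and the reason Weyl's inequality alone is insufficient, is that a bounded off-diagonal perturbation only guarantees an $O(1)$ error in each eigenvalue, not $o(1)$. The sharpening comes from the quantitative second-moment identity above: the product $\epsilon_{n}\cdot m$ is forced to stay bounded because $\operatorname{tr}(M^{2})-\operatorname{tr}(M'^{2})-m^{2}=2\|v\|^{2}$ is, so the growing denominator $m$ pins $\epsilon_{n}$ itself down to $o(1)$. The only bookkeeping that needs care is checking that the constant $K$ depends only on $C$ and $\|\chi\|_{C^{2}}$, which certifies the uniformity claimed by the lemma.
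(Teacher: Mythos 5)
Your proof is correct, but it runs along a genuinely different line than the paper's. The paper argues directly with the characteristic polynomial: it divides the last row of $\det\bigl(\frac{\pa^2\rho}{\pa z_p\pa\bar z_q}-tI_n\bigr)=0$ by $a_n$, lets $a_n\to\infty$ so that the limiting equation is the characteristic equation of the $(n-1)\times(n-1)$ block, and invokes continuous dependence of roots on coefficients; the statement about $\lambda_n$ is then read off from the trace. Your route instead combines Cauchy interlacing ($\lambda_j\le\tilde\lambda_j\le\lambda_{j+1}$) with the two moment identities $\operatorname{tr}(M)=\operatorname{tr}(M')+m$ and $\operatorname{tr}(M^2)=\operatorname{tr}(M'^2)+m^2+2\|v\|^2$, which forces $\epsilon_n(\lambda_n+m-2K)\le 2\|v\|^2$ and hence $\epsilon_n\le\|v\|^2/(m-K)$; the checks I made of the identity $\epsilon_n(\lambda_n+m)=2\|v\|^2+2\sum_j\epsilon_j\tilde\lambda_j-\sum_j\epsilon_j^2$ and of the sandwich $0\le\epsilon_j\le\epsilon_n$ confirm the argument is sound, and the constants indeed depend only on $C$ and $\|\chi\|_{C^2}$. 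What your version buys is a quantitative rate, $\epsilon_n=O(1/a_n)$, and with it an explicit justification of the uniformity of the $o(1)$ terms claimed in the lemma, something the paper's appeal to continuity of roots leaves implicit; what the paper's version buys is brevity and no need for interlacing or second-moment bookkeeping.
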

\begin{proof}
Dividing the last row of the characteristic equation
$$det(\frac{\pa^2\rho}{\pa z_p\pa \bar{z}_q}-t I_n)=0$$
by $a_n$ and then passing with $a_n$ to infinity we obtain that the all but one of the roots  satisfy the characteristic equation for the matrix
$(\frac{\pa^2\rho}{\pa z_p\pa \bar{z}_q})_{p,q=1}^{n-1}$ and the first part of the claim follows from the continuous dependence of eigenvalues with respect to the matrix coefficients. The equality of $\lambda_n$ follows simply from taking the traces of $(\frac{\pa^2\rho}{\pa z_p\pa \bar{z}_q})_{p,q=1}^n$ and $(\frac{\pa^2\rho}{\pa z_p\pa \bar{z}_q})_{p,q=1}^{n-1}$.

\end{proof}
\begin{proof}[Proof of Theorem \ref{compprin}]

We begin with the following claim providing a lower bound for $\alpha$:

{\bf Claim 1:}Let $\Omega$ be a bounded domain with $\mathcal C^2$ smooth boundary. If for any $m$-psh functions $u,v\in \mathcal C^2(\overline{\Omega})$ satisfying   $u\geq v$ in $\Omega$, $u=v$ on $\partial\Omega$, one has
$$\int_{\Omega}\mathcal M_m^{\al}(u)\leq\int_{\Omega}\mathcal M_m^{\al}(v),$$
for some $\al>0$, then $\al\geq 1$.

Fix any strictly negative smooth function $\chi$ on the unit ball $B_1(0)$ which vanishes together with its gradient on $\pa B_1(0)$. As a concrete example we may take $\chi(z):=-(1-||z||^2)^2$. Take $u(z):=\sum_{j=1}^na_j|z_j|^2$,
$v(z):=\sum_{j=1}^na_j|z_j|^2+\chi$, for some sufficiently large positive  constants $a_j$, so that both $u$ and $v$ are $m$-psh. If the integral comparison principle were true then
\begin{equation}\label{beforedivision}
\int_{B_1(0)}\mathcal M_m^{\al}(u)\leq\int_{B_1(0)}\mathcal M_m^{\al}(v).
\end{equation}

Let now $a_n$ to infinity, while keeping other $a_j$'s fixed. Using lemma \ref{linalg} in Equation (\ref{beforedivision}), after dividing both sides by $a_n^{\binom{n-1}{m-1}\al}$ we end up with
$$\int_{B_1(0)}\tilde{\mathcal M}_m^{\al}(u)\leq\int_{B_1(0)}\tilde{\mathcal M}_m^{\al}(v),$$
where the $\tilde{}$ sign denotes computation of $\mathcal M_m$ in the first $n-1$-coordinates. Letting now $a_{n-1}$ to infinity we can repeat the process. After the $n-m$-th iteration it is easy to see that we end up with
\begin{equation}\label{afterdivision}\int_{B_1(0)}(\sum_{j=1}^ma_j)^{\al}\leq\int_{B_1(0)}(\sum_{j=1}^ma_j+\chi_{j\bar{j}}(z))^{\al}.
\end{equation}

Note that inequality (\ref{afterdivision}) holds for {\it all} $\mathcal C^2$ smooth functions $\chi$ assuming that $a_j$'s, $j=1,\cdots,m$ are large enough.  In particular taking the path $v_{\e}:=\sum_{j=1}^na_j|z_j|^2+\e\chi$ for $a_1,\cdots a_m$ fixed and applying the whole process we end up with

$$\int_{B_1(0)}(\sum_{j=1}^ma_j)^{\al}\leq\int_{B_1(0)}(\sum_{j=1}^ma_j+\e\chi_{j\bar{j}}(z))^{\al}.$$ 

Expanding the right hand side in $\e$ we obtain
\begin{align*}
&\int_{B_1(0)}(\sum_{j=1}^ma_j+\e\chi_{j\bar{j}}(z))^{\al}=\int_{B_1(0)}(\sum_{j=1}^ma_j)^{\al}\\
&+\int_{B_1(0)}\al(\sum_{j=1}^ma_j)^{\al-1}\e\sum_{k=1}^m\chi_{k\bar{k}}(z)\\
&+\int_{B_1(0)}\al(\al-1)(\sum_{j=1}^ma_j)^{\al-2}\frac{\e^2}2(\sum_{k=1}^m\chi_{k\bar{k}}(z))^2+o(\e^2)\\
&=I+II+III+IV.
\end{align*}
The first term clearly matches the left hand side in (\ref{afterdivision}). The second term is zero as it can be seen from integration by parts (we use the vanishing of the gradient of $\chi$ at the boundary). But the third term is strictly negative if $\al<1$ and it dominates the fourth one for small $\e$. Hence we must have $\al\geq 1$, which yields the claim.

The next claim in turn provides an upper bound for $\alpha$:

{\bf Claim 2:} Let $B_1(0)$ be the unit ball in $\C^n$. If for any rotationally invariant $m$-psh functions $u,v\in \mathcal C^2(\overline{B_1(0)})$ satisfying   $u\geq v$ in $\Omega$, $u=v$ on $\partial\Omega$, one has
$$\int_{B_1(0)}\mathcal M_m^{\al}(u)\leq\int_{B_1(0)}\mathcal M_m^{\al}(v),$$
for some $\al>0$, then $\al\leq \frac{1}{\binom{n-1}{m-1}}$.

It is straightforward to compute that if $u(z):=\chi(||z||^2)$ for some $\mathcal C^2$ smooth function $\chi$, then
the eigenvalues of the complex Hessian of $u$ satisfy 
$$\lambda_1(z)=\cdots=\lambda_{n-1}(z)=\chi'(||z||^2),\ \lambda_n(z)=\chi'(||z||^2)+||z||^2\chi''(||z||^2).$$
Thus
\begin{equation}\label{radialmu}
 \mathcal M_m(u)(z)=[(m\chi'(||z||^2))^{\binom{n-1}{m}}(m\chi'(||z||^2))+||z||^2\chi''(||z||^2))^{\binom{n-1}{m-1}}].
\end{equation}
We apply this for the family of $\chi_A(t):=\frac12(\frac{(t+A)^2}{1+A}-1+A),\ A\geq 0$. It is easy to see that the
corresponding functions $u_A(z):=\chi_A(||z||^2)$ are plurisubharmonic, hence $m$-psh, and they all vanish on the unit sphere.
Also $u_A$ is a decreasing sequence as $A$ increases.

Supposing that the comparison principle holds for some $\al$ we obtain then
$$\int_{B_1(0)}\mathcal M_m^{\al}(u_0)\leq\int_{B_1(0)}\mathcal M_m^{\al}(u_A)$$
for any $A>0$. But then, denoting by $c_{2n-1}$ the area of the unit sphere, the left hand side is simply
\begin{align*}
&\int_{B_1(0)}M_m(u_0)^{\al}=\int_{B_1(0)}[(m\chi')^{\binom{n-1}{m}}
(m\chi'+||z||^2\chi'')^{\binom{n-1}{m-1}}]^\al\\
&=\int_{B_1(0)}[(m||z||^2)^{\binom{n-1}{m}}((m+1)||z||^2)^{\binom{n-1}{m-1}}]^{\al}\\
&=c_{2n-1}m^{\binom{n-1}{m}\al}(m+1)^{\binom{n-1}{m-1}\al}\int_0^1r^{2n-1+2\binom{n-1}{m-1}\al+2\binom{n-1}{m}\al} dr\\
&=c_{2n-1}\frac{m^{\binom{n-1}{m}\al}(m+1)^{\binom{n-1}{m-1}\al}}{2n+2\binom nm\al}.
\end{align*}

On the other hand after taking the limit as $A\rightarrow\infty$ the right hand side becomes

\begin{align*}
 lim_{A\rightarrow\infty}\int_{B_1(0)}M_m(u_A)^{\al}=&\int_{B_1(0)}m^{\binom nm\al}=c_{2n-1}m^{\binom nm\al}\int_0^1r^{2n-1}dr\\
 &=c_{2n-1}\frac{m^{\binom nm\al}}{2n}.
\end{align*}
Comparing both sides we obtain the numerical inequality
\begin{equation}\label{outcome}
 c_{2n-1}\frac{m^{\binom{n-1}{m}\al}(m+1)^{\binom{n-1}{m-1}\al}}{2n+2\binom nm\al}\leq c_{2n-1}\frac{m^{\binom nm\al}}{2n},
\end{equation}
which reduces to
$$(1+\frac1m)^{\binom{n-1}{m-1}\al}\leq 1+\frac1m\binom{n-1}{m-1}\al.$$
If now $\al>\frac1{\binom{n-1}{m-1}}$ we get the contradiction with the elementary 
inequality $(1+x)^\beta>1+\beta x$, valid for all $x>0$ and $\beta>1$.

Finally coupling Claim 1 with Claim 2 it is obvious that the comparison principle can hold iff $\al=1$ and $\binom{n-1}{m-1}=1$ i.e.
if $m=1$- the case of the complex Monge-Amp\`ere operator or $m=n$- the Laplacian.

\end{proof}
\begin{remark}
It is interesting to note that for {\it radial} $m$-psh functions the comparison principle does hold true for the operator $\mathcal M_m$ raised to
power $\frac{1}{\binom{n-1}{m-1}}$. We leave the elementary proof to the Reader.
\end{remark}
\end{section}

\end{document}